\newtheorem{definition}{Definition}
\newtheorem{lemma}[definition]{Lemma}
\newtheorem{theorem}[definition]{Theorem}
\newtheorem{example}[definition]{Example}
\newcommand{\R}{{\rm I}\!{\rm R}} 
\newcommand{\N}{{\rm I}\!{\rm N}}
\begin{document}
\global\def\refname{{\normalsize \it References:}}
\baselineskip 12.5pt
%
%
%
\title{\LARGE \bf Heat transfer process with solid-solid interface:\\ Analytical and numerical solutions}

\date{}

\author{\hspace*{-10pt}
\begin{minipage}[t]{2.3in} \normalsize \baselineskip 12.5pt
\centerline{DIANA RUBIO}
\centerline{Univ. Nacional de San Mart\'in}
\centerline{Escuela de Ciencia y Tecnolog\'ia}
\centerline{Centro de Matem\'atica Aplicada}
\centerline{ITECA (CONICET-UNSAM)}
\centerline{25 de Mayo y Francia, San Mart\'in}
\centerline{ARGENTINA}
\end{minipage} \kern 0in
\begin{minipage}[t]{2.3in} \normalsize \baselineskip 12.5pt
\centerline{DOMINGO A. TARZIA}
\centerline{Universidad  Austral}
\centerline{FCE, Departamento de Matem\'atica} \centerline{Paraguay 1950, Rosario}
\centerline{and CONICET}
\centerline{ Godoy Cruz 2290, CABA,}
\centerline{ARGENTINA}
\end{minipage}
\begin{minipage}[t]{2.3in} \normalsize \baselineskip 12.5pt
\centerline{GUILLERMO F. UMBRICHT}
\centerline{Univ.~Nac. de Gral.~Sarmiento}
\centerline{Instituto de Ciencias}
\centerline{Instituto del Desarrollo Humano}
\centerline{J. M. Gutiérrez 1150}
\centerline{Los Polvorines}
\centerline{ARGENTINA}
\end{minipage}%
%
%
\\ \\ \hspace*{-10pt}
\begin{minipage}[b]{6.9in} \normalsize
\baselineskip 12.5pt {\it Abstract:}
This work is aimed at the study and analysis of the heat transport on a metal bar of length $L$ with a solid-solid interface. The process is assumed to be developed along one direction, across two homogeneous and isotropic materials. Analytical and numerical solutions are obtained under continuity conditions at the interface, that is a perfect assembly. The lateral side is assumed to be isolated and a constant thermal source is located at the left-boundary while the right-end stays free allowing the heat to transfer to the surrounding fluid by a convective process. The differences between the analytic solution and temperature measurements at any point on the right would indicate the presence of discontinuities. The greater these differences, the greater the discontinuity in the interface due to thermal resistances, providing a measure of its propagation from the interface and they could be modeled as temperature perturbations. The problem of interest may be described by a parabolic equation with initial, interface and boundary conditions, where the thermal properties, the conductivity and diffusivity coefficients, are piecewise constant functions. The analytic solution is derived by using Fourier methods. Special attention is given to the Sturm-Liouville problem that arises when deriving the solution, since a complicated eigenvalue equation must to be solved. Numerical simulations are conducted by using finite difference schemes where its convergence and stability properties are discussed along with physical interpretations of the results.
\\ [4mm] {\it Key--Words:}
Heat equation, solid-solid interface, eigenvalues problems, mathematical modeling.
\end{minipage}
\vspace{-10pt}}

\maketitle

\thispagestyle{empty} \pagestyle{empty}
%
%
\section{Introduction}
\label{S1} \vspace{-4pt}

Heat transfer problems in multilayer or solid-solid interface materials have been  arisen in a several applications in science and engineering  \cite{Chung2001}. Direct applications can be found in the industry \cite{Holler2020}, including 
metallurgical \cite{Ma2010}, aerospace \cite{Barturkin2005}, technological and  electronic \cite{Cahill2003} and aviation \cite{Ward2006}. A large  number of articles are devote to the  study  of thermal, electromagnetic and/or optical properties  of composed materials, among them  \cite{Cahill2003}-\cite{Chung2001},
\cite{{Hristov2012}},  \cite{Prabhu2002}-\cite{Stevens2007}, \cite{Volz2000}-\cite{Zeng2021}. 
These types of problems are generally approached experimentally  or through numerical simulations. Few articles are found in the literature that focus on mathematical models and analytical descriptions of the thermal process, as in \cite{Holler2020}, where the model is described. In \cite{Umbricht2020MEP}, \cite{Umbricht2021IJHT} the problem is approached analytically for the steady-state. On the other hand, the evolutionary state of the interface problem is studied in \cite{Hristov2012} for a solid material of infinite length.

This work focus on the analytical  solution to a heat transfer problem that it is assumed to occur along a bar composed by two different materials with continuity conditions at the solid-solid interface.
A thermal source is imposed at the left boundary ($x=0$) while free convection is assumed at the right side ($x=L$). To the best of authors' knowledge, the analytical solution to this problem is not published. In \cite{Ozisik1993}, the problem is stated with an  strategy for solving the equation but is it not explicitly solved.
The solution to the perfectly assembly solid-solid interface problem is important  since the differences with observed data it would provide a measure of the discontinuities due to roughness and tension between the materials. 
 
Here, an approach is presented for solving the problem analytically where the solution is obtained as a combination of the steady-state solution and a transient term, where the latter one is calculated using Fourier techniques.
This manner to present the solution is useful to better understand the physical transient behavior.

As in the case of a homogeneous bar, a Sturm-Liouville (S-L) eigenvalue problem arises. Finding its solution is complicated since the coefficients of the equation are not constant but depend on the thermal parameters of the materials involved. The existence of an infinite number of solutions to the S-L equation is demonstrated and an illustrative example is included. This is the most important result of this work.

Numerical simulations of the temperature  profile are conducted using a finite difference scheme of second order centered in space and first order forward in time.  The convergence and stability properties are discussed along with physical interpretations of the results. Analytical and numerical solutions to this problem are useful to predict temperatures profiles under different situations assuming perfect assembly between materials and hence, to detect discontinuities at the interface. 

In Section \ref{framework}, the equations used to describe the process is presented. Section \ref{steady} is aimed to the steady-state heat transfer problem associated to the one of interest. The corresponding transient problem is addressed in Section \ref{transient}, where the eigenvalue problem and the analytical solution  is obtained. In Section \ref{numerical}, some numerical examples of the temperature profile for the discretized equation are included. Finally, conclusions and future worksare discussed.
\section{Mathematical Framework}
\label{framework} \vspace{-4pt}

Consider a unidimensional heat transfer process on a material, which is modeled as a bar whose lateral surface
is totally isolated, and it is made up of two
consecutive sections of different, perfectly assembly, isotropic and homogeneous materials.
This problem can be described by coupled 
parabolic equations with interface, initial 
and boundary conditions. At the left-boundary of 
the bar, a constant thermal source is assumed
while the right-end is free allowing the 
convection process to occur (see Figure \ref{barscheme}).
\begin{figure}
\begin{center}
\includegraphics[scale=0.15]{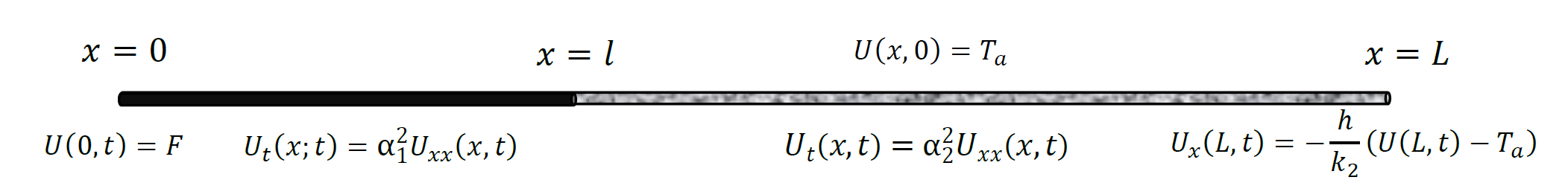}
\vspace{-0.5cm}
\caption{
Heat conduction
problem with interface.}
\label{barscheme}
\end{center}
\end{figure}

The system to be solved is given by the heat equations
\begin{eqnarray}
\label{ec1}
U_t(x,t)&=&\alpha_{1}^{2} U_{xx}(x,t), \,\,\,
0<x<l , \\
\label{ec2}
U_t(x,t)&=&\alpha_{2}^{2} U_{xx}(x,t), \,\,\,  l<x<L, 
\end{eqnarray}
for $t>0$, with initial temperature
\begin{equation}
\label{initialc}
U(x,0)=T_a,   \qquad \quad   0<x<L,
\end{equation}
and boundary conditions 
\begin{eqnarray}
\label{bc}
U(0,t)&=&F, \qquad  \qquad \qquad \quad \, t>0,\\
k_2\, U_x(L,t)&=&-h\,(U(L,t)-T_a),  \, t>0,
\end{eqnarray}
where $L$ represents the length of the bar, $T_a$ the temperature of the surrounded fluid,
$F$ denotes the temperature at $x=0$, $l$ the interface position  ($0<l<L$) and $h$ denotes the heat transfer coefficient due to convection at $x=L$. The 
coefficients $\alpha_1^2$, $k_1$ and $\alpha_2^2$, $k_2$ represent the diffusivity and the thermal conductivity for the materials at the left and 
right side of the bar, respectively. For two perfectly assembled homogeneous materials, continuity conditions are given at the interface position $ x = l $, that is,
\begin{eqnarray}
\label{interface1}
\displaystyle \lim_{x \to l^-} U(x,t)&=&\lim_{x \to l^+} U(x,t),\\
\label{interface2}
\displaystyle \lim_{x \to l^-} k_1 U_x(x,t)&=&\lim_{x \to l^+} k_2 U_x(x,t),
\end{eqnarray}
for $t>0$.
From now on, for simplicity we assume that
\begin{equation}
\label{FmayorqueTa}
F>T_a.                 
\end{equation}
\section{The steady-state  problem}
\label{steady}\vspace{-4pt}

The steady-state problem corresponding to the initial and boundary problem with interface  \eqref{ec1}-\eqref{interface2} is given by the following equations
\begin{eqnarray}
\label{eqest1}
U_{xx}^S (x)&=&0, \qquad  0<x<l,\\
\label{eqest2}
U_{xx}^S (x)&=&0,  \qquad l<x<L,\\
\label{lbc}
U^S (0)&=&F,   \hspace*{5cm}        \\
\label{rbc}
-k_2 U_x^S (L) &=& h (U^S (L)-T_a ), \hspace*{2,5cm}   \\
\label{ifbc1}
U^S(l^-)&=&U^S(l^+),\hspace*{3.8cm} \\
\label{ifbc2}
k_1 U_x^S(l^-) &=& k_2 U_x^S(l^+), \hspace*{3cm} 
\end{eqnarray}
where $U^S(l^-)$ and $U^S(l^+)$  denote $\displaystyle \lim_{x \to l^-}
U^S(x)$ and $\displaystyle \lim_{x \to l^+} U^S(x)$, respectively.

\begin{lemma}
The solution to the steady-state problem \eqref{eqest1}-\eqref{ifbc2} 
is given by the following expression:
\begin{equation}
\label{Uestacionario_mu}
U^S(x)=
\begin{cases}
F - Q \mu \frac{1}{k_1}\, x, &  0 \leq x \leq l, \\
F - Q  \mu \left(\frac{1}{k_2}(x-l) + \frac{l}{k_1} \right), &  l<x \leq L, 
\end{cases}
\end{equation}
where $Q=(F-T_a) h $, and $\mu$  is the dimensionless coefficient 
\begin{equation}
\label{mu}
\mu =\frac{1}{1+\frac{hL}{k_2}
+\left( \frac{1}{k_1} -\frac{1}{k_2} \right) hl }= \frac{k_1 
k_2}{D},
\end{equation}
being $D=k_1 k_2 +k_1 hL + (k_2 - k_1) hl$, $k_1, k_2, h, l, L, T_a, F$  positive constants, $L>l>0$.
\end{lemma}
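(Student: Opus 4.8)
The plan is to exploit the fact that the general solution of $U_{xx}^S=0$ on an interval is an affine function, so we may write $U^S(x)=A+Bx$ on $[0,l]$ and $U^S(x)=C+D(x-l)$ on $[l,L]$, and then pin down the four constants $A,B,C,D$ using the four side conditions \eqref{lbc}, \eqref{rbc}, \eqref{ifbc1} and \eqref{ifbc2}.

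First I would impose the left boundary condition \eqref{lbc}, giving $A=F$, and the interface continuity \eqref{ifbc1}, giving $C=F+Bl$. The flux-matching condition \eqref{ifbc2} reads $k_1B=k_2D$, hence $D=(k_1/k_2)B$; in this way all constants are expressed through the single unknown $B$. Next I would substitute these relations into the Robin condition \eqref{rbc} at $x=L$. After collecting terms this collapses to one linear equation for $B$, whose solution is $B=-h(F-T_a)k_2/D$ with $D=k_1k_2+k_1hL+(k_2-k_1)hl$; equivalently $B=-Q\mu/k_1$ with $Q=(F-T_a)h$ and $\mu$ as in \eqref{mu}. Back-substituting then gives $C=F-Q\mu l/k_1$ and $D=-Q\mu/k_2$, which is precisely the piecewise expression \eqref{Uestacionario_mu}.

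To finish, I would record that $D=k_1k_2+k_1h(L-l)+k_2hl>0$ since $L>l>0$ and all the physical parameters are positive, so that $\mu$ is well defined and positive, and verify that the two displayed forms of $\mu$ in \eqref{mu} coincide by multiplying numerator and denominator of the first form by $k_1k_2$. The computation is entirely elementary; the only step requiring a little care is the algebraic simplification in the Robin condition, namely keeping track of the $k_1/k_2$ factors and recognizing the resulting denominator as $D$ (equivalently, as the reciprocal of the first expression for $\mu$ after clearing $k_1k_2$). Beyond this bookkeeping there is no real obstacle.
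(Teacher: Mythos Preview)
Your argument is correct and follows exactly the route the paper takes: observe that $U^S$ is piecewise affine, impose the four side conditions to determine the constants, and rewrite using $\mu$. Your added remarks on the positivity of the denominator and the equivalence of the two expressions for $\mu$ are sound (just note that you use the symbol $D$ both for the slope on $[l,L]$ and for the denominator $k_1k_2+k_1hL+(k_2-k_1)hl$; renaming one of them would avoid ambiguity).
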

\begin{proof}
Equations \eqref{eqest1}-\eqref{eqest2} imply that the solution is a piecewise linear function. Imposing the boundary and interface conditions \eqref{lbc}-\eqref{ifbc2} it follows that, after algebraic computations, the solution can be written as
\begin{equation}
\label{Uestacionario}
U^S(x)=
\begin{cases}
\displaystyle
F - \frac{Q k_2}{D} x,\qquad \qquad \qquad\quad0 \leq x \leq l,\\
\\
\displaystyle
F - \frac{Q k_1  
k_2}{D} \left(\frac{x-l}{k_2} + \frac{l}{k_1} \right), \,l<x \leq L.
\end{cases}
\end{equation}
By using the dimensionless coefficient $\mu$ defined in \eqref{mu}, the expression \eqref{Uestacionario_mu} is obtained.
\end{proof}
This section is included for the sake of completeness and no much detail or discussion is given here. In  \cite{Umbricht2020}, \cite{Umbricht2020MEP}, \cite{Umbricht2021IJHT}  an equivalent expression can be found for the solution to \eqref{eqest1}-\eqref{ifbc2}  and its  consistency with the corresponding one for an homogeneous bar with the same  boundary conditions.
\begin{example}
Consider the problem described by the equations 
\eqref{eqest1}-\eqref{ifbc2} with $L = 1m$, $T_a=25^\circ C$, $h=10\, W⁄(m^2 {}^\circ C )$ and $F=100^\circ C$.
\end{example}
Figure \ref{fig:Us} shows the spatial profile of temperatures for different materials and different interface points. 
It can be seen that the solution is piecewise linear and, since the thermal source is higher than the room temperature, the temperature decreases as a function of the distance from the source location. The less conductive materials leads to a greater decrease in temperature.
\begin{figure}[H]
\label{fig:stationary}
\begin{center}
\includegraphics[scale=1.]{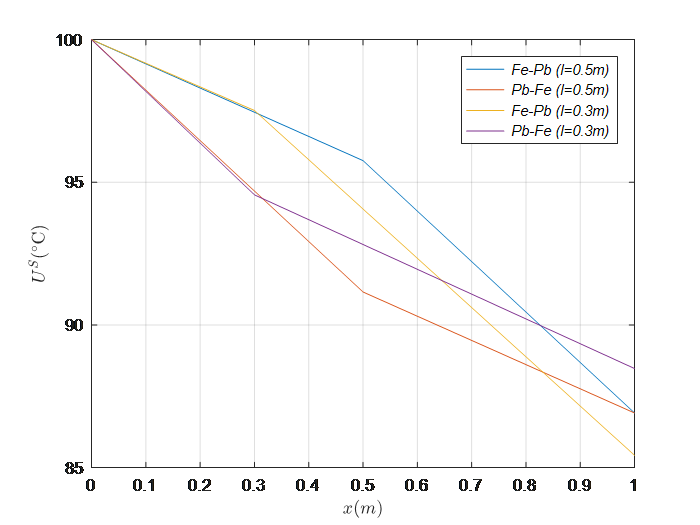} \\
\includegraphics[scale=1.]{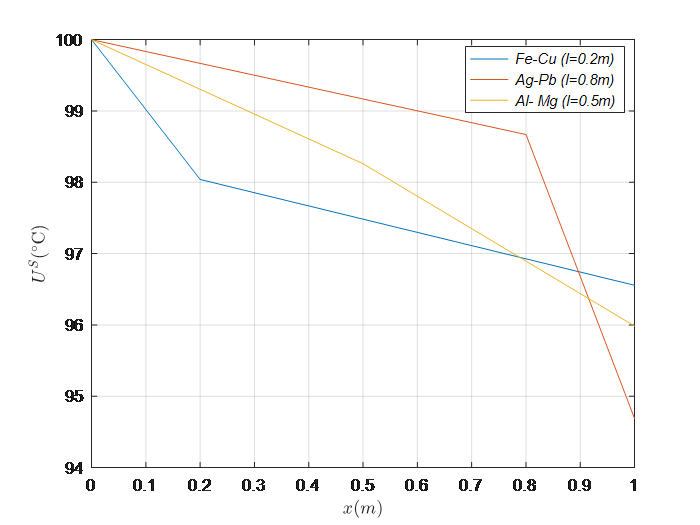}
 \end{center}
\vspace{-0.5cm}
\caption{\label{fig:Us}Temperature profiles for different contact point positions (left) and different  materials (right).}
 \end{figure}
The plots on top show different situations for a Fe-Pb or a Pb-Fe bar.
 It can be observed that in the case of  Fe-Pb,  higher temperature values are achieved for  $x<l=L/2$. This is consistent with the analytical solution since, in this case, it results
\begin{equation}
\label{Ul_half}
   U^s(l) = F - \frac{F-T_a}{ (\frac{k_1}{k_2}+1)+\frac{2k_1}{hL}}.
\end{equation}
Then, for the same pair of materials, the  temperature  values at $x=l$ are greater when the more conductive  material occupies the left half of the bar (i.e., $k_1>k_2$).
On the other hand, if  $l=L/2$ from \eqref{Uestacionario_mu} it follows that
\begin{equation}
U^s(L) = F - \frac{F-T_a}{1+\frac{2k_1 k_2}{ h L(k_1 + k_2)}}.    
\end{equation}
Then, the temperature value at $x=L$ depends on $k_1+k_2$ and $k_1k_2$, hence the relative location of the two materials to the left or right (i.e. Fe-Pb and Pb-Fe) does not influence the temperature value $U(L)$ at the right edge (see also \cite{Umbricht2020MEP}).

At the bottom of  Figure \ref{fig:Us}, the temperature profiles for different interface locations  and different material pairs are  shown. The materials were chosen so that their thermal conductivities satisfy different relationships that are reflected in the slopes of the lines. 
For Fe-Cu: $k_1<k_2$, Al-Mg: $ k_1 \simeq k_2$  (thermally similar), Ag-Pb: $k_1>k_2$ (see Table \ref{thermal_prop}).
\begin{table}
\begin{center}
\caption{\label{thermal_prop} Thermal properties of different materials.}
\begin{tabular}{lccc}
\hline
Material (Symbol)	& $k (W/m ^\circ C)$ &	$\alpha^2 \times 10^4 
(m^2/s)$ \\ 
\hline
Lead	(Pb) & 35 & 0.23673\\
Nickel	(Ni) & 70 & 0.22660\\
Iron	(Fe) & 73 & 0.20451\\
Magnesium (Mg) & 156 & 0.88300\\
Aluminium (Al) & 204 & 0.84010\\
Cupper (Cu) & 386 & 1.12530\\
Silver (Ag) & 419 & 1.70140\\
\hline
\end{tabular}
\end{center}
\vspace{-0.5cm}
\end{table}
\section{The  transient problem}
\label{transient}\vspace{-4pt}

In order to solve the problem \eqref{ec1}-\eqref{interface2}, we consider
\begin{equation}
\label{Usuma}
U(x,t)=U^s(x)+\varphi(x,t), \, 0\leq x \leq L, \, t\geq 0,
\end{equation}
where $U^s (x)$ is given by \eqref{Uestacionario} or 
\eqref{Uestacionario_mu}-\eqref{mu} and $\varphi(x,t)$ satisfies the following initial and boundary problem with interface for  $t>0$
\begin{eqnarray}
\label{fi:ec1}
 \varphi_t (x,t)&=&\alpha_1^2\,  \varphi_{xx} (x,t), \,\, 0 < x < l, \qquad\\
\label{fi:ec2}
\varphi_t (x,t)&=&\alpha_2^2\,  \varphi_{xx} (x,t),  \,\, l<x<L, \qquad\\
\label{fi:cit0}
 \varphi (x,0)&=& T_a-U^s (x),  \,\,  0 < x < L, \quad\\
\label{fi:cbx0}
 \varphi (0,t)&=&0, \\
\label{fi:cbxL}
 - k_2 \varphi_x (L,t)&=&h \varphi(L,t), \\         \label{fi:itf1}
\varphi(l^-,t)&=&\varphi(l^+,t), \\        
\label{fi:itf2}
k_1 \varphi_x (l^-,t)&=&k_2 \varphi_x (l^+,t).  
\end{eqnarray}
By using this representation, the transient terms can be viewed as " perturbations" to the steady-state.

The standard procedure of separation of variables is used to find $\varphi (x,t)$. Assuming the existence of $X(x)$ and $T(t)$ that satisfy, for $t>0$,
\begin{equation}
\label{fiXT}
\varphi(x,t)= \begin{cases}
X_1 (x).T(t),  &  0 \leq x \leq l,\qquad\\
X_2 (x).T(t),  &  l<x \leq L,  
\end{cases}
\end{equation}
and the following equations and conditions are obtained:
\begin{eqnarray}
X_1''(x)- \xi_1 X_1 (x)&=& 0,  \qquad 0 < x < l, \qquad \\
X_2'' (x)-\xi_2 X_2 (x)&=&0, \qquad   l<x < L,          \\
T' (t)&=&\xi_1 \alpha_1 T(t) \nonumber\\
      &=&\xi_2 \alpha_2 T(t), \,      t>0,\\
X_1 (0)&=&0,     \\  
k_2 X_2' (L)+h X_2 (L)&=&0, \qquad \\
X_1 (l^- )  &=&X_2 (l^+ ),      \\  
k_1 X_1'(l^- )  &=&k_2 X_2'(l^+ ). 
\end{eqnarray}
A solution to the above eigenvalue problem exists provided that $\xi_i=-\lambda_i^2 < 0$, and it follows that 
\begin{eqnarray}
X_1 (x) &=& A_1  \sin (\lambda_1 x),     \\  
X_2(x)&=&A_2  \sin (\lambda_2 x)  +B_2 \cos(\lambda_2 x),  \qquad  \\ 
\label{T1T2}
T(t) &=& C_1 e^{-\lambda_1 \alpha_1^2 t}=C_2 e^{-\lambda_2 ^2 \alpha_2^2 t},  \end{eqnarray}
where 
\begin{equation}
\label{alfa}
\lambda_1= \alpha \lambda_2, \qquad 
    \alpha =\sqrt{\displaystyle  \frac{\alpha_2^2}{\alpha_1^2}}=\frac{\alpha_2}{\alpha_1}.
\end{equation}
From now on, we denote $\lambda= \lambda_2$ and, without loss of generality, it is 
assumed that $A_1=1$.  Setting $A=A_2,B=B_2$ and $C=C_2$ we have
\begin{eqnarray}
\label{X1}
&X_1 (x) = \sin (\alpha \lambda x),  \hspace{2cm}    \\  \label{X2}                 
& X_2 (x)=A \sin (\lambda x) +B \cos(\lambda x), \\ 
\label{T}
&T(t)=Ce^{-\lambda^2 \alpha_2^2 t},   \hspace{2cm} 
\end{eqnarray} 
 where $\lambda >0$ must satisfy the eigenvalue equation
 \begin{equation}
\tan(xL)=\frac{k_2 Ax+h B}{k_2 Bx-Ah}, \hspace{1,5cm} x>0,
 \end{equation}
 or equivalenty,
 \begin{equation}
 \label{eig1}
-\frac{k_2}{h}x=\frac{B+A \tan(xL)}{A-B\tan(xL)}, \hspace{1cm} x>0.
 \end{equation}
 From the two interface conditions, and letting	
 \begin{equation}
 \label{k}
k=\frac{k_1}{k_2},
 \end{equation}
it follows that
\begin{eqnarray}
\label{A} 
 &A=k \alpha \cos(\alpha l x) \cos(lx)+\sin(\alpha lx) \sin(lx),& \,\,\,\\ 
 \label{B}
 &B=\sin(\alpha lx) \cos(lx)-l \alpha \cos(\alpha lx) \sin(lx).&\,
 \end{eqnarray}
Replacing \eqref{A}-\eqref{B} in equation \eqref{eig1}, by algebraic computation the eigenvalue equation \eqref{eig1} can be written as
\begin{equation}
\label{eig}
-\frac{k_2}{h} x=\frac{\tan (\alpha lx) + k \alpha \tan((L-l)x)}{k \alpha -\tan (\alpha lx) \tan((L-l)x)},    \end{equation}
for $x>0$. 

Lemmas \ref{deff}- \ref{tansuma} show that the right hand side of the above equation \eqref{eig} may be written as the tangent of a sum of two functions and, therefore, it has an infinite number of essential discontinuities. These results will be used in Theorem \ref{Teoeig} to prove that there exist infinitely many solutions to \eqref{eig}. 
\begin{lemma}
\label{deff}
For  $\alpha, l, L, k >0$ with  $L>l$, the function
 $f: D \subset (0,+\infty) \rightarrow \R$ defined by
\begin{equation}
\label{f}    
    f(x)= {\rm atan} 
\left( \frac{\tan(\alpha l x)}{k \alpha} \right) +(L-l)x,
\end{equation}
  satisfies $\R^+ \subseteq Im(f)$ where 
 \begin{equation}
 D = [0, +\infty) - \{x_n, \quad n \in \N\},
 \end{equation}
being 
\begin{equation}
\label{xn}
     x_n=-\frac{\pi}{2 \alpha l} + n \,\frac{\pi}{\alpha l}.
\end{equation} 
\end{lemma}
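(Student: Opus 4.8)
The plan is to decompose the set $D\cap(0,+\infty)$ into the maximal open intervals on which $f$ is continuous, to check that on each such interval $f$ is strictly increasing and therefore sweeps out an explicit "bounded plus linear" range, and finally to verify that consecutive ranges overlap, so that their union is all of $\R^+$.

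First I would identify the removed points. Since $x_n=-\frac{\pi}{2\alpha l}+n\frac{\pi}{\alpha l}=\frac{(2n-1)\pi}{2\alpha l}$, these are exactly the positive zeros of $\cos(\alpha l x)$, i.e.\ the singularities of $x\mapsto\tan(\alpha l x)$ on $(0,+\infty)$; the sequence $(x_n)$ is strictly increasing with $x_n\to+\infty$. Setting $x_0=0$, the set $D\cap(0,+\infty)$ is the disjoint union of the open intervals $I_n=(x_{n-1},x_n)$, $n\ge 1$. On $I_1$ the map $x\mapsto\tan(\alpha l x)$ is continuous and strictly increasing from $0$ to $+\infty$, while on each $I_n$ with $n\ge 2$ it is continuous (its singularities on $(0,+\infty)$ being precisely the endpoints $x_{n-1},x_n$) and strictly increasing from $-\infty$ to $+\infty$. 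Composing with the strictly increasing map $u\mapsto{\rm atan}(u/(k\alpha))$ (here $k\alpha>0$), the term $g_n(x):={\rm atan}\!\bigl(\tan(\alpha l x)/(k\alpha)\bigr)$ is continuous and strictly increasing on each $I_n$, with one-sided endpoint limits $g_1(0^+)=0$, $g_1(x_1^-)=\pi/2$ and, for $n\ge 2$, $g_n(x_{n-1}^+)=-\pi/2$, $g_n(x_n^-)=\pi/2$.

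Next I would add the linear part. Since $L-l>0$, the map $x\mapsto(L-l)x$ is continuous and strictly increasing, so $f=g_n+(L-l)x$ is continuous and strictly increasing on each $I_n$; by the intermediate value theorem $f$ maps $I_n$ onto the open interval between its one-sided endpoint limits, namely
\[
f(I_1)=\Bigl(0,\ \tfrac{\pi}{2}+(L-l)x_1\Bigr),\qquad f(I_n)=\Bigl(-\tfrac{\pi}{2}+(L-l)x_{n-1},\ \tfrac{\pi}{2}+(L-l)x_n\Bigr)\quad(n\ge 2).
\]
The concluding step is the overlap argument. The right endpoint of $f(I_n)$ equals $\tfrac{\pi}{2}+(L-l)x_n$, and the left endpoint of $f(I_{n+1})$ equals $-\tfrac{\pi}{2}+(L-l)x_n$; since $-\tfrac{\pi}{2}<\tfrac{\pi}{2}$, these intervals overlap. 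A short induction then gives $\bigcup_{j=1}^{n}f(I_j)\supseteq\bigl(0,\ \tfrac{\pi}{2}+(L-l)x_n\bigr)$, and letting $n\to\infty$, since $x_n\to+\infty$, one gets $Im(f)\supseteq(0,+\infty)=\R^+$, which is the claim. (In fact $0\in D$ and $f(0)=0$, so even $[0,+\infty)\subseteq Im(f)$, but only $\R^+\subseteq Im(f)$ is required.)

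I do not expect a genuine obstacle here; the only delicate bookkeeping is that $I_1$ must be handled separately, because on it $\tan(\alpha l x)$ runs from $0$ rather than from $-\infty$, and one must confirm the overlap of consecutive ranges so that the union is a true interval with no gaps at the removed poles $x_n$ — this is exactly what the elementary inequality $-\tfrac{\pi}{2}<\tfrac{\pi}{2}$ guarantees.
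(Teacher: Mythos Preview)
Your proof is correct and follows essentially the same route as the paper: both compute the one-sided limits $\lim_{x\to x_n^\mp}f(x)=\pm\tfrac{\pi}{2}+(L-l)x_n$, establish that $f$ is strictly increasing on each maximal interval of continuity, and conclude that the images of these intervals cover $\R^+$. The only cosmetic differences are that the paper shows monotonicity via the explicit derivative $f'(x)>0$ rather than by composition of increasing maps, and it appeals directly to $\lim_{x\to+\infty}f(x)=+\infty$ instead of spelling out your overlap-and-induction argument.
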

\begin{proof}
Consider the one-sided limits at the discontinuity points $x_n$ given in \eqref{xn}. Since 
\begin{equation}
{\rm atan} \left(\frac{\tan (\alpha lx)}{k \alpha}\right) \in \left(-\frac{\pi}{2}, \frac{\pi}{2 }\right),
 \end{equation}
it results that
\begin{equation}
 \lim_{x \to x_n^-}  f(x) = \frac{\pi}{2}+(L-l) x_n,
 \end{equation}
\begin{equation}
 \lim_{x \to x_n^+}  f(x)  = -\frac{\pi}{2}+(L-l) x_n.
 \end{equation}
Therefore, 
\begin{equation}
\label{step}
 \lim_{x \to x_n^-}  f(x) > \lim_{x \to x_n^+}  f(x)     
\end{equation}
and since for $x\in D$
\begin{equation}
\label{df}
  f'(x) = \frac{l}{k\left[1+ \left(\frac{\tan (\alpha lx)}{k \alpha}\right)^2\right] \cos^2 (\alpha lx)}   + L-l>0,
\end{equation}
it follows that $f$ is increasing in the interval $(0, x_1)$ and in each interval  $( x_n, x_{n+1})$, $\forall \, n \in \N$.
On the other hand, the first term of $f$  is bounded, and  $L>l$, then  
\begin{equation}
\label{unbound}
 \lim_{x \to +\infty} f(x)=+\infty.
 \end{equation}
From \eqref{step}, \eqref{df} and \eqref{unbound} it follows that all positive real values are included in $Im(f)$, and the proof is completed.
\end{proof}
Different parameter values will produce functions  $f(x)$ defined in \eqref{f} having graphs of similar shape. 
The example bellow illustrates the behaviour  for a particular case.
\begin{example}
\label{ex2}
Consider the expression of the function $f$ given in \eqref{f} for the problem described by the equations \eqref{ec1}-\eqref{interface2}  for a bar made of iron and lead (Fe-Pb).
The particular  parameter values for this example are included in Table \ref{tab:ex2}.
\end{example}
\vspace{-0.3cm}
\begin{table}[H]
\begin{center}
\caption{Parameter values for Example \ref{ex2}.}
\begin{tabular}{lc}
Parameter	& Value\\
\hline
$L(m)$ &	5\\
$l(m)$ &	2\\
$k_1 (W/m {}^\circ C)$ &	73\\
$k_2  (W/m {}^\circ C)$ &	35\\
$\alpha_1^2 (m^2⁄s)$ 	& $0.20451\times10^{-4}$\\
$\alpha_2^2 (m^2⁄s)$	& $0.23673\times10^{-4}$\\
$h (W⁄(m^2 {}^\circ C ))$	& 10\\
\hline
\end{tabular}
    \label{tab:ex2}
\end{center}
\vspace{-0.5cm}
\end{table}
Figure \ref{fig:ex2} shows  the plots of the  piecewise continuous function $f$ given in \eqref{f} for this particular case 
 \begin{equation}
     f(x)= {\rm atan} \left( \frac{\tan( 1.85892x)}{0.44562} \right) + 3x
 \end{equation}
along with  $y=3x$ and
 \begin{equation}
 {\rm atan} \left( \frac{\tan( 
 1.85892x)}{0.44562} \right).
 \end{equation}
\begin{figure}[h]
\begin{center}
    \includegraphics[scale=0.11]{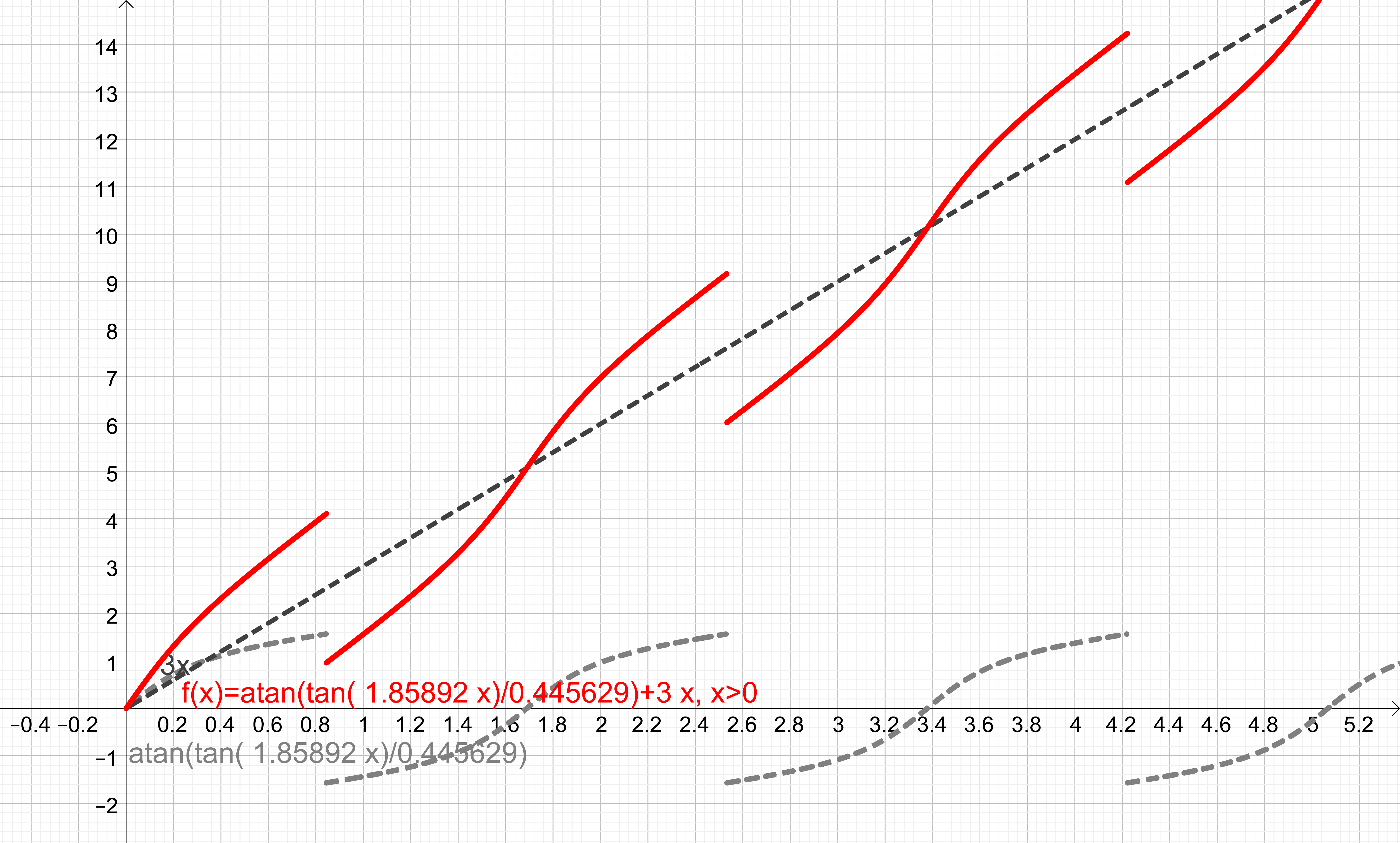}
    \caption{Red line: $f(x)= {\rm atan} \left( \frac{\tan( 1.85892x)}{0.44562} \right) + 3x$, 
    Grey dotted lines: $y=3x$ and  ${\rm atan} \left( \frac{\tan( 1.85892x)}{0.44562} \right)$ .}
    \label{fig:ex2}
    \end{center}
\end{figure}
It can be seen that, although $f$ has an infinite number of  discontinuities due to the term $\frac{\tan( 1.85892x)}{0.44562}$,  the image of the function $f$ (in red) includes all positive values. This will be crucial to prove that the equation \eqref{eig} has infinitely many solutions.
\begin{lemma} 
\label{tansuma}
Given $\alpha, L, l, k >0$, it follows that, for $x>0$ 
\begin{equation}
\label{tanf}
\frac{\tan (\alpha lx) + k \alpha \tan((L-l)x)}{k \alpha -\tan (\alpha lx) \tan((L-l)x)}=\tan(f(x)),
\end{equation}
where $f$ is defined in \eqref{f}.
\end{lemma}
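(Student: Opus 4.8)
The plan is to read the right-hand side $\tan(f(x))$ through the addition formula for the tangent and verify that it collapses to the quotient on the left of \eqref{tanf}. Write $f(x)=\theta(x)+\psi(x)$ with
\[
\theta(x)={\rm atan}\!\left(\frac{\tan(\alpha l x)}{k\alpha}\right),\qquad \psi(x)=(L-l)x .
\]
By the very definition of ${\rm atan}$ one has $\tan\bigl(\theta(x)\bigr)=\dfrac{\tan(\alpha l x)}{k\alpha}$ at every $x>0$ for which $\tan(\alpha l x)$ is finite, while $\tan\bigl(\psi(x)\bigr)=\tan\bigl((L-l)x\bigr)$ holds without restriction.

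Next I would apply $\tan(\theta+\psi)=\dfrac{\tan\theta+\tan\psi}{1-\tan\theta\tan\psi}$, which gives
\[
\tan\bigl(f(x)\bigr)=\frac{\dfrac{\tan(\alpha l x)}{k\alpha}+\tan\bigl((L-l)x\bigr)}{1-\dfrac{\tan(\alpha l x)}{k\alpha}\tan\bigl((L-l)x\bigr)} .
\]
Multiplying numerator and denominator by $k\alpha>0$ produces exactly the left-hand side of \eqref{tanf}, which is the asserted identity.

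The only point that needs more than a line of computation is keeping track of the domain: the step above is valid wherever the tangents involved are finite and $1-\tan\theta\tan\psi\neq 0$. However, the excluded abscissae are precisely the points where the left-hand side of \eqref{tanf} is itself singular --- the zeros of $k\alpha-\tan(\alpha l x)\tan\bigl((L-l)x\bigr)$ and the poles $x_n$ of $\tan(\alpha l x)$ from \eqref{xn}; and at such an $x_n$ the quotient $\tan(\alpha l x)/(k\alpha)$ tends to $\pm\infty$, forcing $\theta(x)\to\pm\pi/2$, so both sides of \eqref{tanf} have matching one-sided limits there as well. Hence \eqref{tanf} is an identity between the two partially defined functions on their common natural domain, and there is no genuine obstruction. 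I expect essentially no difficulty in the argument; the real content of Lemmas \ref{deff}--\ref{tansuma} is the structural observation that the awkward rational expression appearing in the eigenvalue equation \eqref{eig} is nothing but $\tan(f(x))$, which is what Theorem \ref{Teoeig} will exploit to locate infinitely many roots.
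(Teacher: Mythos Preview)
Your argument is correct and coincides with the paper's own proof: the paper also applies the tangent addition formula to $f(x)={\rm atan}\!\left(\tfrac{\tan(\alpha l x)}{k\alpha}\right)+(L-l)x$, simplifies $\tan({\rm atan}(\,\cdot\,))$, and then clears the $k\alpha$ in numerator and denominator. Your explicit remarks on the common domain are a welcome addition but do not change the approach.
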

\begin{proof}
Consider $f$ defined in \eqref{f}. 
By using the formula for the tangent of a sum and some algebraic computations, it follows that
\begin{align}
\label{tanfsuma}
\displaystyle
   \tan(f(x))
=& \tan \left( {\rm atan} \left( \frac{\tan( 1.85892x)}{0.44562} \right) + 3x \right)\nonumber \\
=&
\frac{\tan \left( {\rm atan}
\left( \frac{\tan(\alpha l x)}{k \alpha} \right)\right) + \tan((L-l)x)}{1-\tan \left({\rm atan} \left( \frac{\tan(\alpha l x)}{k \alpha} \right)\right) \tan((L-l)x)} \nonumber \\
=& \frac{ \frac{\tan(\alpha l x)}{k \alpha} + \tan((L-l)x)}{1- \frac{\tan(\alpha l x)}{k \alpha} \tan((L-l)x)}.
\end{align}
The equation \eqref{tanf} is obtained after multiplying the numerator and denominator in \eqref{tanfsuma} by $k\alpha$.
\end{proof}
\begin{theorem}
\label{Teoeig}
Let  $k_2, h, \alpha, l, L, k >0$, with $L>l$. The equation
\begin{equation}
\label{eig2}
-\frac{k_2}{h} x=\frac{\tan (\alpha lx) + k \alpha \tan((L-l)x)}{k \alpha -\tan (\alpha lx) \tan((L-l)x)},    
 \qquad      x>0,  
\end{equation}
has infinitely many positive solutions $0 <x_1 <x_2 \cdots <x_n < \cdots$. 
\end{theorem}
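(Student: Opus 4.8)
The plan is to use Lemma \ref{tansuma} to rewrite \eqref{eig2} as the scalar equation $-\frac{k_2}{h}\,x=\tan\!\big(f(x)\big)$, and then to locate a sign change of the continuous difference of the two sides on each ``branch'' of $\tan\circ f$. The left member $x\mapsto-\frac{k_2}{h}x$ is a continuous strictly decreasing line through the origin, so the whole argument reduces to understanding the poles of $\tan\!\big(f(x)\big)$ and its monotonicity between consecutive poles.

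The key observation is that, although $f$ has the infinitely many jump discontinuities $x_n$ of \eqref{xn}, by the one-sided limits computed in the proof of Lemma \ref{deff} the jump of $f$ at each $x_n$ is \emph{exactly} $-\pi$, hence invisible to the $\pi$-periodic function $\tan$. I would make this precise by introducing the unwrapped function $\tilde f(x)=f(x)+\pi\,\#\{n\in\N:\,x_n\le x\}$: the $+\pi$ jump of the counting term cancels the $-\pi$ jump of $f$ at every $x_n$, so $\tilde f$ extends continuously to $[0,+\infty)$; it is strictly increasing because $f'>0$ on $D$ by \eqref{df}; and since $\tilde f(0)=f(0)=0$ while $\tilde f(x)\ge f(x)\to+\infty$ by \eqref{unbound}, $\tilde f$ is a homeomorphism of $[0,+\infty)$ onto $[0,+\infty)$. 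Crucially $\tan\!\big(\tilde f(x)\big)=\tan\!\big(f(x)\big)$, so we may work with the genuine continuous increasing bijection $\tilde f$ in place of $f$.

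Now set $p_m=\tilde f^{-1}\!\big(\tfrac{\pi}{2}+m\pi\big)$ for $m\ge 0$; these are well defined since $\tilde f$ is onto $[0,+\infty)$ — this is exactly where $\R^+\subseteq Im(f)$ from Lemma \ref{deff} enters — and $0<p_0<p_1<\cdots$ with $p_m\to+\infty$. On each open interval $(p_m,p_{m+1})$ the map $x\mapsto\tan\!\big(\tilde f(x)\big)$ is continuous and strictly increasing from $-\infty$ to $+\infty$, so the continuous function $g(x):=-\frac{k_2}{h}x-\tan\!\big(\tilde f(x)\big)$ is strictly decreasing on that interval, with $g(p_m^+)=+\infty$ and $g(p_{m+1}^-)=-\infty$. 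By the intermediate value theorem $g$ has exactly one zero in $(p_m,p_{m+1})$, for every $m\ge 0$; since these intervals are pairwise disjoint and exhaust $(p_0,+\infty)$, the zeros form a strictly increasing sequence $x_1<x_2<\cdots$ tending to $+\infty$, each solving \eqref{eig2}. (No $x_n$ can be one of these solutions, as $\tan(\alpha l x_n)$ is undefined there, so the reduction introduces no degeneracy.) This proves the theorem.

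I expect the only genuine obstacle to be the point handled in the second paragraph: one must \emph{not} mistake the discontinuities $x_n$ of $f$ for poles of the right-hand side of \eqref{eig2} — the true poles are exactly the solutions of $f(x)=\tfrac{\pi}{2}+m\pi$, which exist for all $m\ge0$ precisely because of Lemma \ref{deff} — and the cleanest way to obtain the monotone ``$-\infty$ to $+\infty$'' behaviour needed for the intermediate value argument is to absorb the $-\pi$ jumps into the unwrapped bijection $\tilde f$. Once that is in place, the proof is the same branch-by-branch count as for the classical homogeneous-bar Robin eigenvalue equation $\tan(\lambda L)=-\tfrac{k}{h}\lambda$.
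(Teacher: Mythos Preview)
Your argument is correct and follows the same route as the paper: invoke Lemma~\ref{tansuma} to rewrite \eqref{eig2} as $-\frac{k_2}{h}x=\tan(f(x))$ and then use Lemma~\ref{deff} to conclude that $\tan(f(x))$ has infinitely many branches meeting the line. The paper's proof stops at that one-line assertion, whereas your unwrapping $\tilde f$ and the branch-by-branch intermediate value argument supply the rigor the paper leaves implicit; this is a welcome fleshing-out of the same idea rather than a different approach.
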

\begin{proof} 
Lemma \ref{deff} and Lemma \ref{tansuma} allow to write  
\begin{equation}
\label{proofteo1}
-\frac{k_2}{h} x=\tan (f(x)), \qquad     x>0, \end{equation}
where $f$ is defined in \eqref{f}. Lemma \ref{deff} ensures that $\R^+ \subseteq Im(f)$ implying that $\tan(f(x))$ has an infinite number of branches that intersects the line $y=-\frac{k_2}{h} x$ for $x>0$.
\end{proof}
The following 
example illustrates solutions to Equation \eqref{eig} for different setups.
\begin{example}
\label{ex3}
As for the previous example, a bar made of iron and lead (Fe-Pb) it is considered. 
All parameter values for this example are included in Table \ref{tab:ex3}.
\end{example}
\vspace{-0.5cm}
\begin{table}[h]
 \centering
    \caption{Parameter values for Example \ref{ex3}.}
    \begin{tabular}{lc}
Parameter	& Value\\
\hline
$L(m)$ &	5\\
$l(m)$ &	2\\
$k_1 (W/m ^\circ C)$ &	73\\
$k_2  (W/m ^\circ C)$ &	35\\
$\alpha_1^2 (m^2⁄s)$ 	& $0.20451\times10^{-4}$\\
$\alpha_2^2 (m^2⁄s)$	& $0.23673\times10^{-4}$\\
$h (W⁄(m^2 {}^\circ C ))$	& 10\\
$F ({}^\circ C)$	& 150\\
$T_a  ({}^\circ C)$ &	20\\
\hline
    \end{tabular}
    \label{tab:ex3}
\end{table}
%
%
\begin{figure}[h]
    \begin{center}
    \includegraphics[scale=0.6]{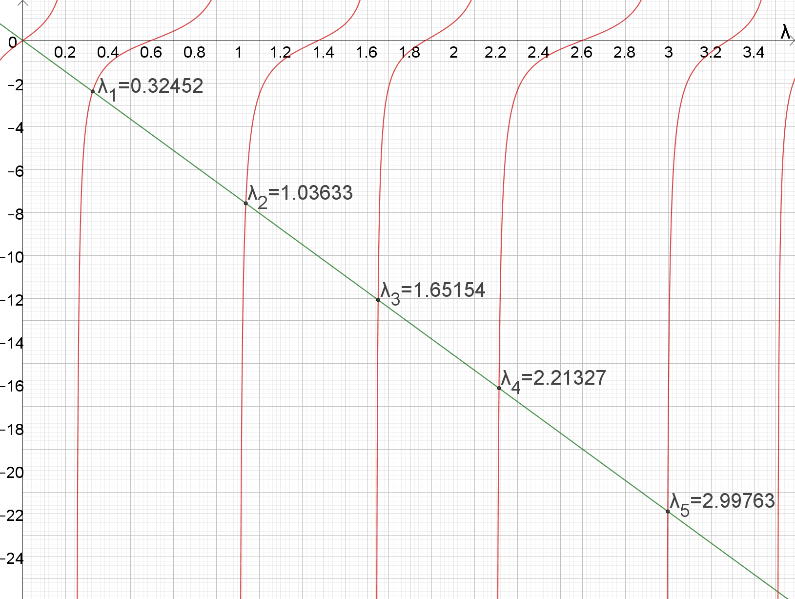}
        \end{center}
        \vspace{-0.5cm}
    \caption{Solutions to Eq. \eqref{eig} associated to  the heat transfer with 
    interface problem \eqref{ec1}-\eqref{FmayorqueTa} for a Fe-Pb bar and modeling parameter values given in Table \ref{tab:ex3} (Example \ref{ex3}
    ).}
    \label{fig:ex3}
\end{figure}
The eigenvalue problem \eqref{eig} in this case becomes
\begin{equation}
    \label{ex3:eig}
-7.3x=\frac{\tan (1.8589 x) + 0.44563 \tan(3x))}{0.44563 -\tan( 1.8589x)  \tan(3x) },
\end{equation}
some of its solutions are shown in Figure \ref{fig:ex3}. Similar results are obtained for different bar compositions. 
%
%

Figure \ref{fig:ex3}
shows some of the solutions of the equations \eqref{ex3:eig}.
These solutions might  not be the first ones, since it could exist discontinuities on the right side of \label{eig2} that do not appear in the plot due, for instance, to the discretization step.
%
\begin{theorem}
The initial-boundary value problem with a solid-solid interface,  described by equations \eqref{ec1}-\eqref{interface2}, has a unique solution of the form
\begin{equation}
\label{Usmasfi}
U(x,t)=U^s (x)+ \varphi (x,t), \,\,0\leq x \leq L,\,  t\geq 0,  
\end{equation}                      
where  $U^s (x)$ is given by the expressions  \ref{Uestacionario} (or
\ref{Uestacionario_mu}-\ref{mu})  and
\begin{equation}
\label{fi}
\varphi(x,t)=
    \begin{cases}
       \varphi_1(x,t),  \quad & 0 \leq x\leq l, \\
       \\
       \varphi_2(x,t),    \quad &      l<x \leq L,
       \end{cases}
\end{equation}
for $t>0$, being
\begin{equation}
\label{fi1}
\hspace{-2cm}\varphi_1(x,t)= \sum_{n=1}^{\infty} C_n \sin(\alpha \lambda_n x) e^{-\lambda_n^2 \alpha_2^2 t},
\end{equation}
\begin{equation}
\label{fi2}
\begin{split}
\varphi_2(x,t)= \sum_{n=1}^{\infty} C_n [k \alpha \cos(\alpha \lambda_n l) \sin( \lambda_n (x-l)) \qquad \\
     + \sin(\alpha \lambda_n l) \cos( \lambda_n (x-l))] 
       e^{-\lambda_n^2 \alpha_2^2 t}, \quad \qquad
       \end{split}
\end{equation}
with
\begin{equation}
\label{cn}
\begin{split}
C_n = 2 (T_a-F) \hspace{5.5cm}\\ \frac{\displaystyle \frac{- \sin(\alpha \lambda_n l)}{\alpha \lambda_n } \displaystyle \frac{\mu h}{k_2} + \cos\left(\alpha \lambda_n l\right) \left(-1+ \frac{\mu h}{k_2}  l\right)+1}
{\alpha \lambda_n  l-\sin(\alpha \lambda_n l)\cos(\alpha \lambda_n l)},
\end{split}
\end{equation}
for $n \in N$ where $\lambda_n$ are the solutions to the equation \eqref{eig} and $\mu$ is defined in \eqref{mu}.
\end{theorem}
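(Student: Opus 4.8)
The plan is to exploit the splitting $U=U^{s}+\varphi$ introduced in \eqref{Usuma}. Since the Lemma of Section~\ref{steady} shows that $U^{s}$ given by \eqref{Uestacionario} solves the stationary equations and carries the inhomogeneous data at $x=0$, at $x=L$ and at the interface $x=l$, a function of the claimed form solves \eqref{ec1}--\eqref{interface2} if and only if $\varphi$ solves the homogeneous transient problem \eqref{fi:ec1}--\eqref{fi:itf2} with $\varphi(x,0)=T_{a}-U^{s}(x)=:g(x)$. Thus it suffices to construct $\varphi$ as in \eqref{fi}--\eqref{cn} and to prove it is the only solution of that problem; uniqueness for $\varphi$ then gives uniqueness for $U$.

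First I would assemble the separated solutions already produced before the statement. For every root $\lambda_{n}$ of the eigenvalue equation \eqref{eig} --- and Theorem~\ref{Teoeig} yields infinitely many, $0<\lambda_{1}<\lambda_{2}<\cdots\to\infty$ --- formulas \eqref{X1}--\eqref{X2} together with \eqref{A}--\eqref{B}, after the shift $x\mapsto x-l$, give the eigenfunction
\[
\Phi_{n}(x)=\begin{cases}\sin(\alpha\lambda_{n}x), & 0\le x\le l,\\[2pt] k\alpha\cos(\alpha\lambda_{n}l)\sin(\lambda_{n}(x-l))+\sin(\alpha\lambda_{n}l)\cos(\lambda_{n}(x-l)), & l<x\le L,\end{cases}
\]
which satisfies $X_{1}''=-\alpha^{2}\lambda_{n}^{2}X_{1}$ on $(0,l)$, $X_{2}''=-\lambda_{n}^{2}X_{2}$ on $(l,L)$, the boundary condition \eqref{fi:cbx0}, the interface conditions \eqref{fi:itf1}--\eqref{fi:itf2}, and the Robin condition \eqref{fi:cbxL} precisely because $\lambda_{n}$ solves \eqref{eig}. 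Hence each $\Phi_{n}(x)e^{-\lambda_{n}^{2}\alpha_{2}^{2}t}$ solves \eqref{fi:ec1}--\eqref{fi:itf2}, and so does any superposition $\sum_{n}C_{n}\Phi_{n}(x)e^{-\lambda_{n}^{2}\alpha_{2}^{2}t}$ for which term-by-term differentiation is legitimate; the eigenvalues are positive, the case $\xi_{i}=-\lambda_{i}^{2}\ge0$ having been ruled out above.

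The analytical core is to recognise the transmission eigenvalue problem as a self-adjoint one. Equip the relevant space with the weighted inner product
\[
\langle u,v\rangle=\frac{k_{1}}{\alpha_{1}^{2}}\int_{0}^{l}uv\,dx+\frac{k_{2}}{\alpha_{2}^{2}}\int_{l}^{L}uv\,dx .
\]
Multiplying the equations for two eigenfunctions by this weight and integrating by parts, the contribution at $x=0$ vanishes (Dirichlet), the contribution at $x=L$ is symmetric (Robin), and --- the decisive point --- the two interface contributions cancel exactly because of the matching $u(l^{-})=u(l^{+})$ and $k_{1}u_{x}(l^{-})=k_{2}u_{x}(l^{+})$; the operator is therefore self-adjoint, the eigenvalues real and positive, and eigenfunctions attached to distinct eigenvalues are $\langle\cdot,\cdot\rangle$-orthogonal. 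By the Sturm--Liouville expansion theorem for such interface problems, $\{\Phi_{n}\}$ is complete, so the piecewise-smooth datum $g$ admits the expansion $g=\sum_{n}C_{n}\Phi_{n}$ with $C_{n}=\langle g,\Phi_{n}\rangle/\langle\Phi_{n},\Phi_{n}\rangle$; evaluating the two integrals and simplifying with the eigenvalue relation \eqref{eig} produces exactly \eqref{cn}. Since $\lambda_{n}\to\infty$, the factors $e^{-\lambda_{n}^{2}\alpha_{2}^{2}t}$ force the series \eqref{fi1}--\eqref{fi2} and all of its $x$- and $t$-derivatives to converge uniformly on $\{t\ge t_{0}\}$ for every $t_{0}>0$; hence $\varphi$ is a genuine (classical) solution of \eqref{fi:ec1}--\eqref{fi:itf2} that attains $g$ as $t\to0^{+}$ (in the $L^{2}$ sense, in view of the mild incompatibility $g(0)=T_{a}-F\neq0$). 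Consequently $U=U^{s}+\varphi$ has the asserted form and solves \eqref{ec1}--\eqref{interface2}.

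Uniqueness follows by an energy estimate in the same weighted norm. If $U_{1},U_{2}$ both solve \eqref{ec1}--\eqref{interface2}, then $w:=U_{1}-U_{2}$ solves \eqref{fi:ec1}--\eqref{fi:itf2} with $w(\cdot,0)\equiv0$. Put $E(t):=\tfrac12\langle w(\cdot,t),w(\cdot,t)\rangle\ge0$; differentiating, using $w_{t}=\alpha_{i}^{2}w_{xx}$ and the same integration by parts, one gets $E'(t)=-k_{1}\int_{0}^{l}w_{x}^{2}\,dx-k_{2}\int_{l}^{L}w_{x}^{2}\,dx-h\,w(L,t)^{2}\le0$ (the interface terms cancel and the Robin condition supplies the boundary term). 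From $E(0)=0$ and $E\ge0$ it follows that $E\equiv0$, hence $w\equiv0$ and $U_{1}=U_{2}$. I expect the real obstacle to be this self-adjointness/completeness step: identifying the correct weight $k_{i}/\alpha_{i}^{2}$ under which the interface-coupled operator is symmetric, and then invoking (or establishing) completeness of its eigenfunctions; by contrast, the explicit evaluation of $\langle g,\Phi_{n}\rangle$ and $\langle\Phi_{n},\Phi_{n}\rangle$ that delivers \eqref{cn} is routine, if lengthy, algebra.
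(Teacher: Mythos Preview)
Your overall strategy is sound and is in fact considerably more complete than the paper's own argument, but the route is genuinely different on the key step of determining the coefficients~$C_n$. The paper's proof is very short: it invokes the separated solutions and superposition to reach \eqref{fi}--\eqref{fi2}, then imposes the initial condition \emph{only on the left subinterval} $[0,l]$, writing $T_a-U^s(x)=\sum_n C_n\sin(\alpha\lambda_n x)$ there and taking
\[
C_n=\frac{\displaystyle\int_0^l (T_a-U^s(x))\sin(\alpha\lambda_n x)\,dx}{\displaystyle\int_0^l \sin^2(\alpha\lambda_n x)\,dx},
\]
from which \eqref{cn} is obtained by direct integration (the denominator $\alpha\lambda_n l-\sin(\alpha\lambda_n l)\cos(\alpha\lambda_n l)$ in \eqref{cn} is precisely $2\alpha\lambda_n\int_0^l\sin^2(\alpha\lambda_n x)\,dx$). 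The paper does not discuss orthogonality, completeness, convergence, or uniqueness at all. Your plan instead introduces the physically correct weighted inner product $\langle u,v\rangle=\tfrac{k_1}{\alpha_1^2}\int_0^l uv+\tfrac{k_2}{\alpha_2^2}\int_l^L uv$, proves self-adjointness of the transmission operator (whence orthogonality and completeness), and derives $C_n=\langle g,\Phi_n\rangle/\langle\Phi_n,\Phi_n\rangle$ over the whole bar; you also supply an energy argument for uniqueness. What your approach buys is rigour: the paper's formula tacitly assumes that the restrictions $\sin(\alpha\lambda_n x)$ are orthogonal on $[0,l]$ with the ordinary inner product, which does not follow from the eigenvalue equation \eqref{eig}.

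The one place to be careful in your plan is your assertion that ``evaluating the two integrals and simplifying with the eigenvalue relation \eqref{eig} produces exactly \eqref{cn}.'' Formula \eqref{cn} is, as just noted, the outcome of the paper's $[0,l]$-only computation, not of your full weighted inner product; your numerator and denominator each pick up an additional $\tfrac{k_2}{\alpha_2^2}\int_l^L(\cdots)$ term. It is not obvious---and I do not believe it is true in general---that these extra contributions cancel in the ratio so as to reproduce \eqref{cn} verbatim. So either you must actually carry out that simplification and show it collapses to \eqref{cn} (using \eqref{eig} and the interface relations), or you will end up with a different, correctly derived, expression for $C_n$ and should say so. Everything else in your plan (self-adjointness with the weight $k_i/\alpha_i^2$, the energy uniqueness argument, the decay-driven convergence) is correct and is a genuine improvement over the paper's treatment.
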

\begin{proof}
Equations \eqref{fiXT}, \eqref{alfa}-\eqref{T}, \eqref{k}-\eqref{eig} and the superposition principle lead to Equations \eqref{fi}-\eqref{fi2}. From the initial condition  \eqref{initialc} it follows that
\begin{equation}
    T_a-U^s (x)=\sum_{n=1} ^{\infty} C_n \sin(\alpha \lambda_n  x), \qquad 0 \leq x \leq l,
\end{equation}
where the Fourier coefficients 
$C_n$ are given by 
\begin{equation}
   \displaystyle C_n=
    \frac{ \displaystyle \int_0^l (T_a -U^s(x)) \sin(\alpha \lambda_n x) dx}{ \displaystyle \int_0^l  \sin^2 (\alpha \lambda_n x) dx}.
\end{equation}
Using the dimensionless coefficient $\mu$  defined in \eqref{mu} and after some calculations, the equation \eqref{cn} is obtained.
\end{proof}

\section{Numerical simulations}
\label{numerical}\vspace{-4pt}

The aim of this section is to illustrate the temperature behavior for the heat transfer process given by  \eqref{ec1}-\eqref{interface2}.
 The numerical solutions presented here are obtained by  using a   finite difference  of second order centered in space and forward in time.
  This explicit method is stable and convergent for
 \begin{equation}
     \max\{\alpha_1^2, \alpha_2^2\} < \frac{(\Delta x)^2}{2 \Delta t}, 
 \end{equation}
 where $\Delta x$ and $\Delta t$ are the discretization steps for the space and time, respectively  \cite{Morton2005}.
 
A computational non-parallel scheme was programmed in Matlab.  A regular partition is considered in space and time to discretize the equations,  taking   $\Delta x=0.01 m.$ and $\Delta t= 0.1 s.$ so that
 $\frac{(\Delta x)^2}{2 \Delta t}=5\times10^{-4}$ which is greater that all possible thermal diffusivity coefficients $\alpha_1^2$, $\alpha_2^2$ considered for this work (see Table \ref{thermal_prop}).
The simulations are obtained in few seconds when using an Intel(R) Core(TM) i7-6700K 4.GHz machine.
\begin{example}
\label{numerical_ex1}
Consider the problem described by the equations 
\eqref{ec1}-\eqref{FmayorqueTa} with $L = 1 \, m$,
where the solid-solid  interface is located at  $l=0.3\,   m$,  the heat transfer coefficient is $h=10\, W⁄(m^2 {}^\circ C )$, $T_a=25^\circ C$,  and the thermal source is $F=100^\circ C$.
\end{example}
Figures \ref{fig:Tempxl}-\ref{fig:TempxL} show the plots for the temperature profiles at the interface $x=l$ and at the right boundary $x=L$, respectively,  for a bar composed by different pairs of materials where the material at the left side of the bar is  Pb (top) and Ag (bottom).
\begin{figure}
\begin{center}
    \includegraphics[scale=0.13]{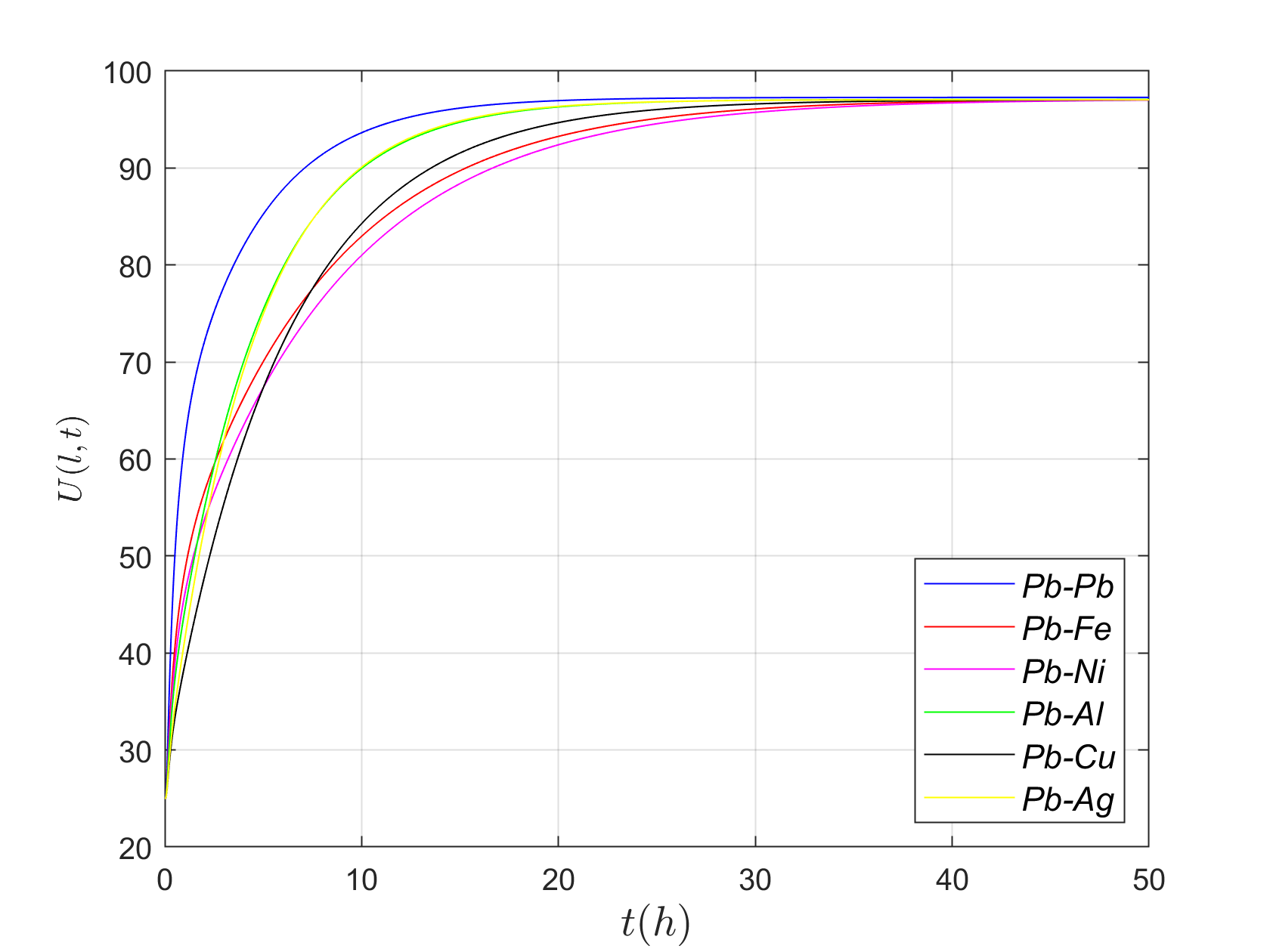}\\
    \includegraphics[scale=0.13]{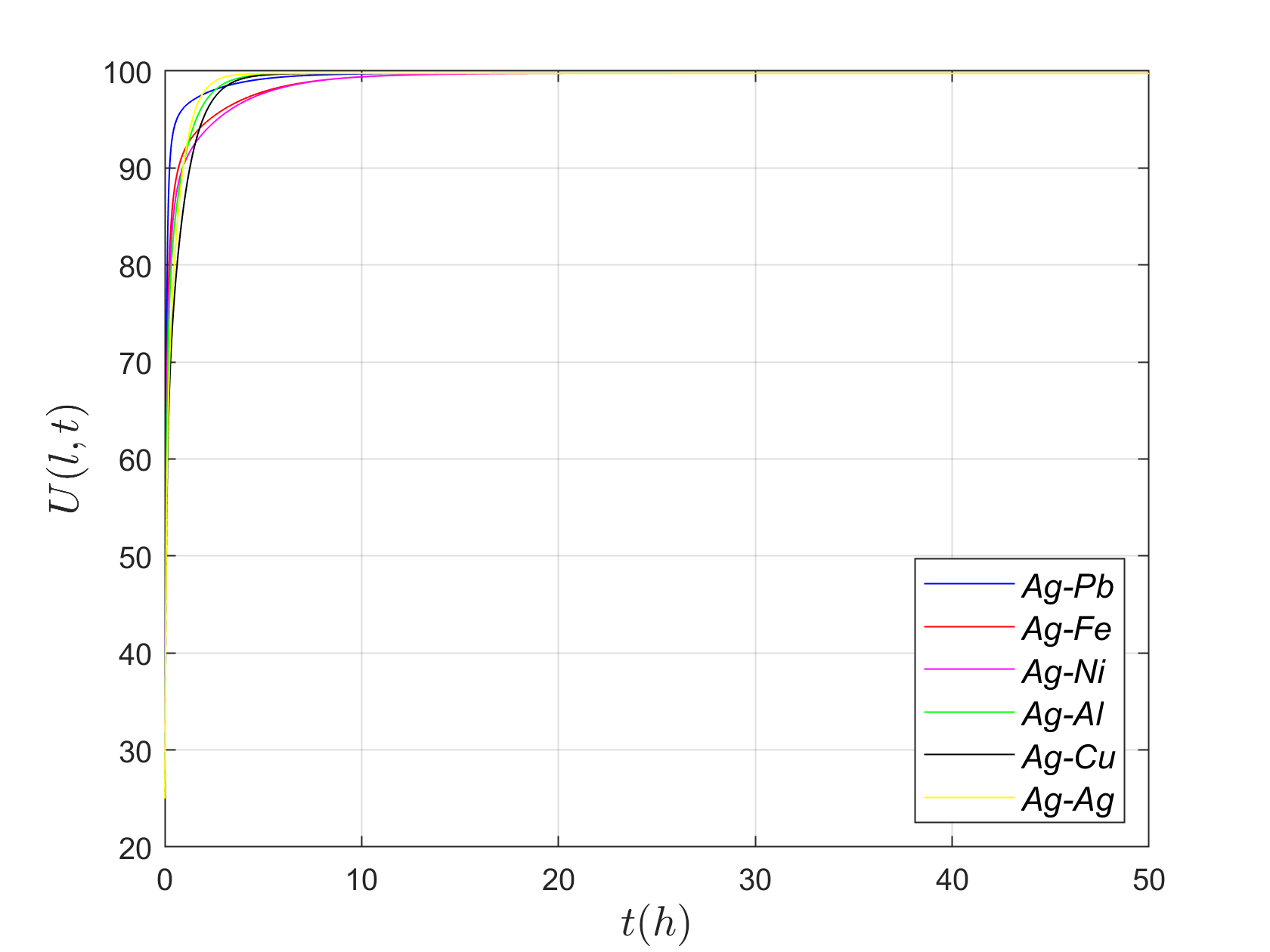}
 \end{center}
 \vspace{-0.5cm}
 \caption{ \label{fig:Tempxl}Temperature profiles at the interface point ($l = 0.3 \, m$) for the Example \ref{numerical_ex1} where the material at the left side of the bar is  Pb (top) and Ag (bottom).}
    \end{figure}
From these figures, it can be seen that $U(l,t)>U(L,t)$ for all $t>0$, that agrees with the analytical solution given in \eqref{fi2}.
It is also observed that in all cases it requires some hours to achieve the steady-state,  and it is reached earlier when more diffusive materials are involved. 
These  observations are also consistent with the analytical solution, since the transient terms of the solution, \eqref{fi1}-\eqref{fi2} (and \eqref{T1T2}, \eqref{alfa}, \eqref{T}) decay exponentially with the diffusivity coefficients which are  of the order of $ 10^{ -4}$.
\begin{figure}
\begin{center}
    \includegraphics[scale=0.13]{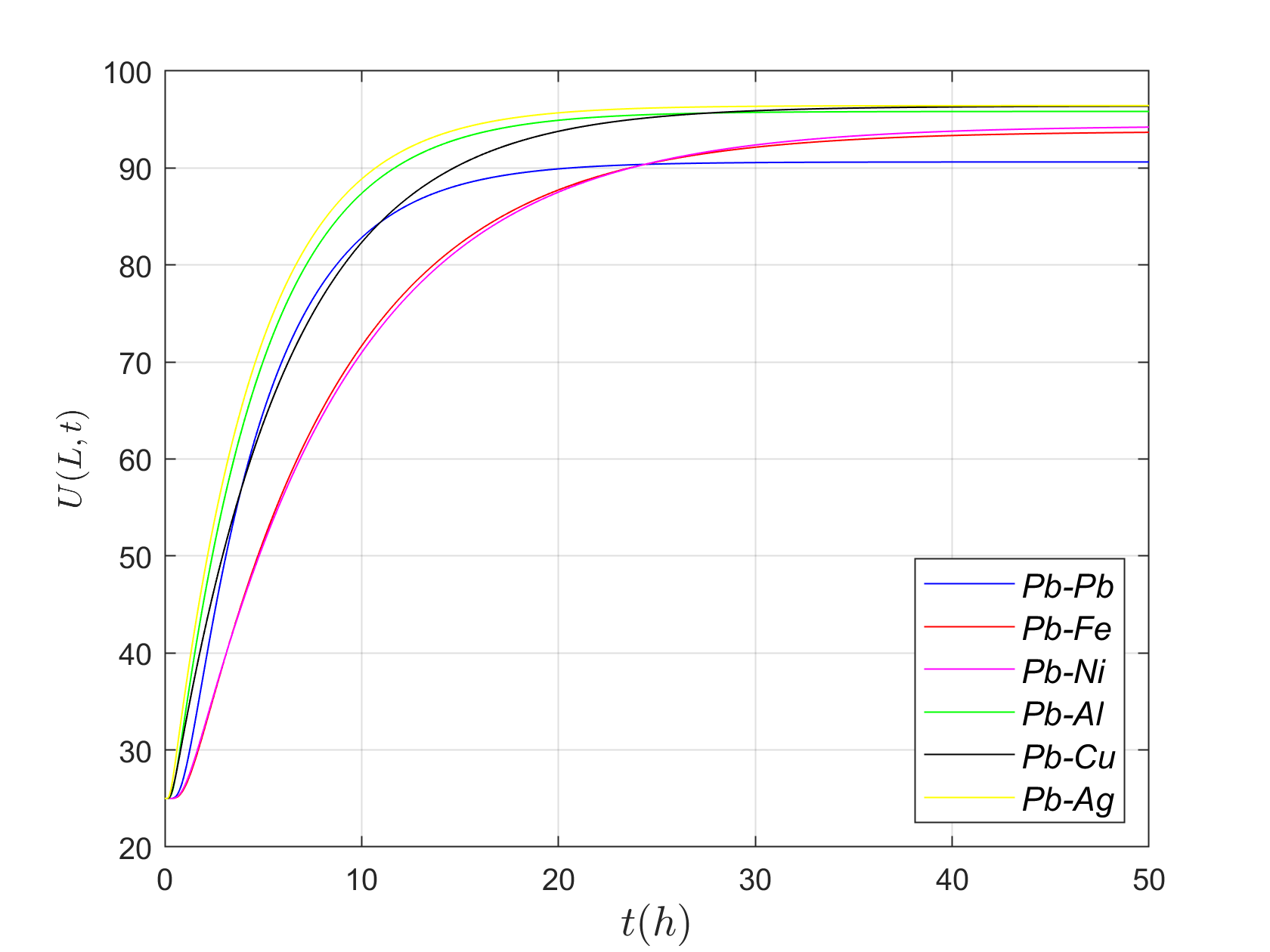}\\
        \includegraphics[scale=0.13]{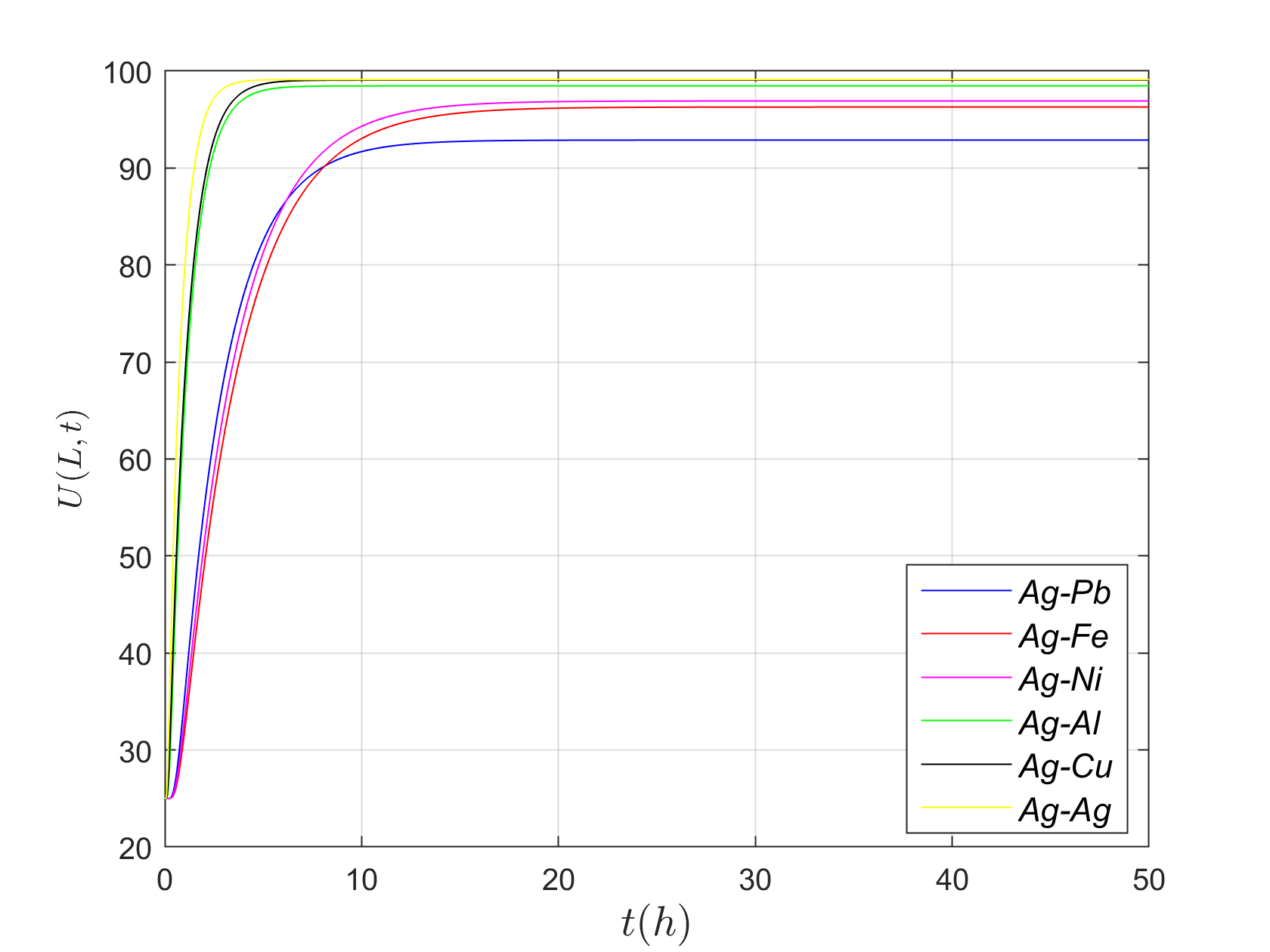}
    \end{center}
   \vspace{-0.5cm}
 \caption{Temperature profiles at the free-end ($x = L$)   for the Example \ref{numerical_ex1} where the material at the left side of the bar is  Pb (top) and Ag (bottom).}
    \label{fig:TempxL}
    \end{figure}
 \begin{figure}
\begin{center}
 \includegraphics[scale=0.13]{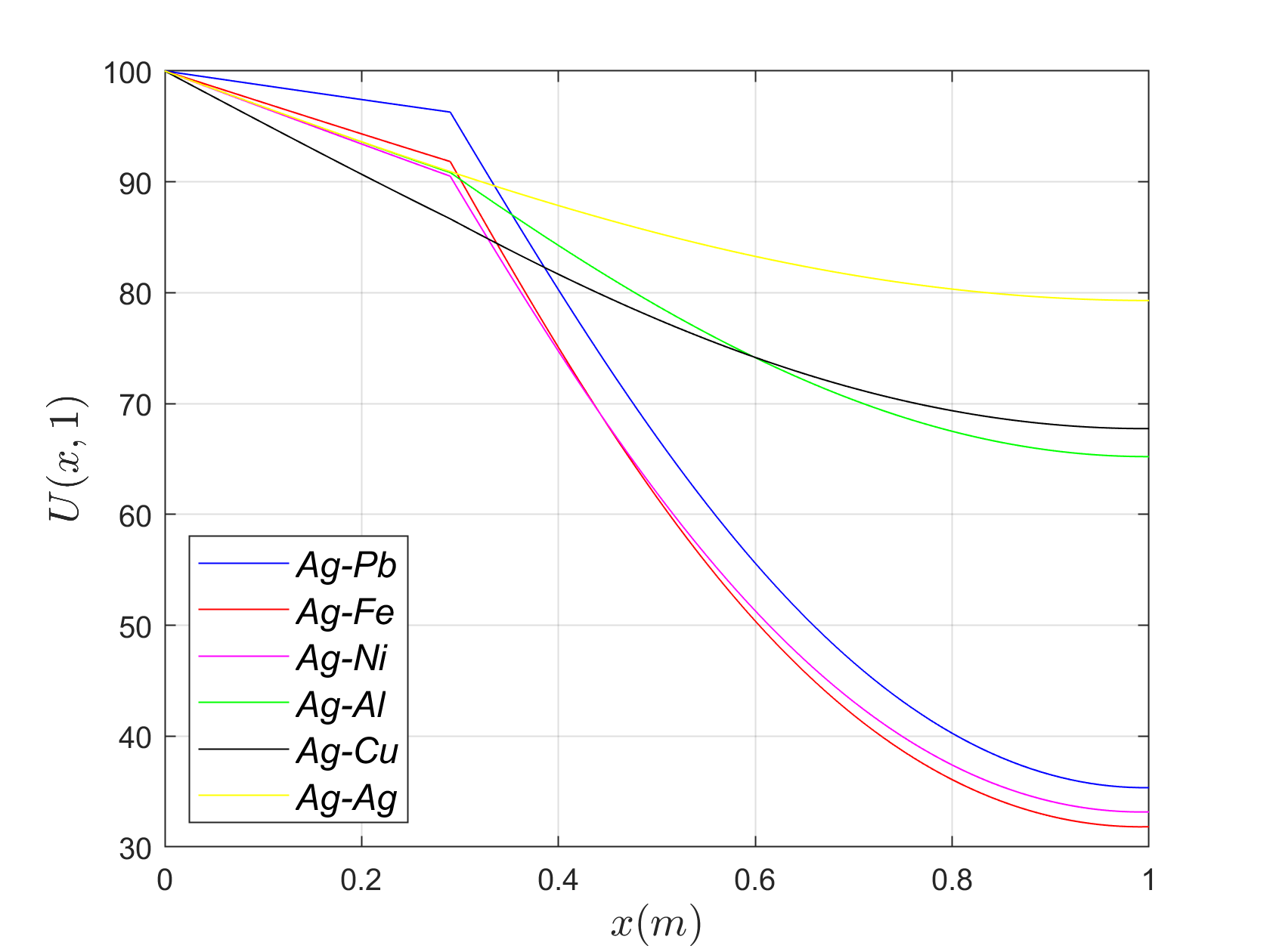}\\
 \includegraphics[scale=0.13]{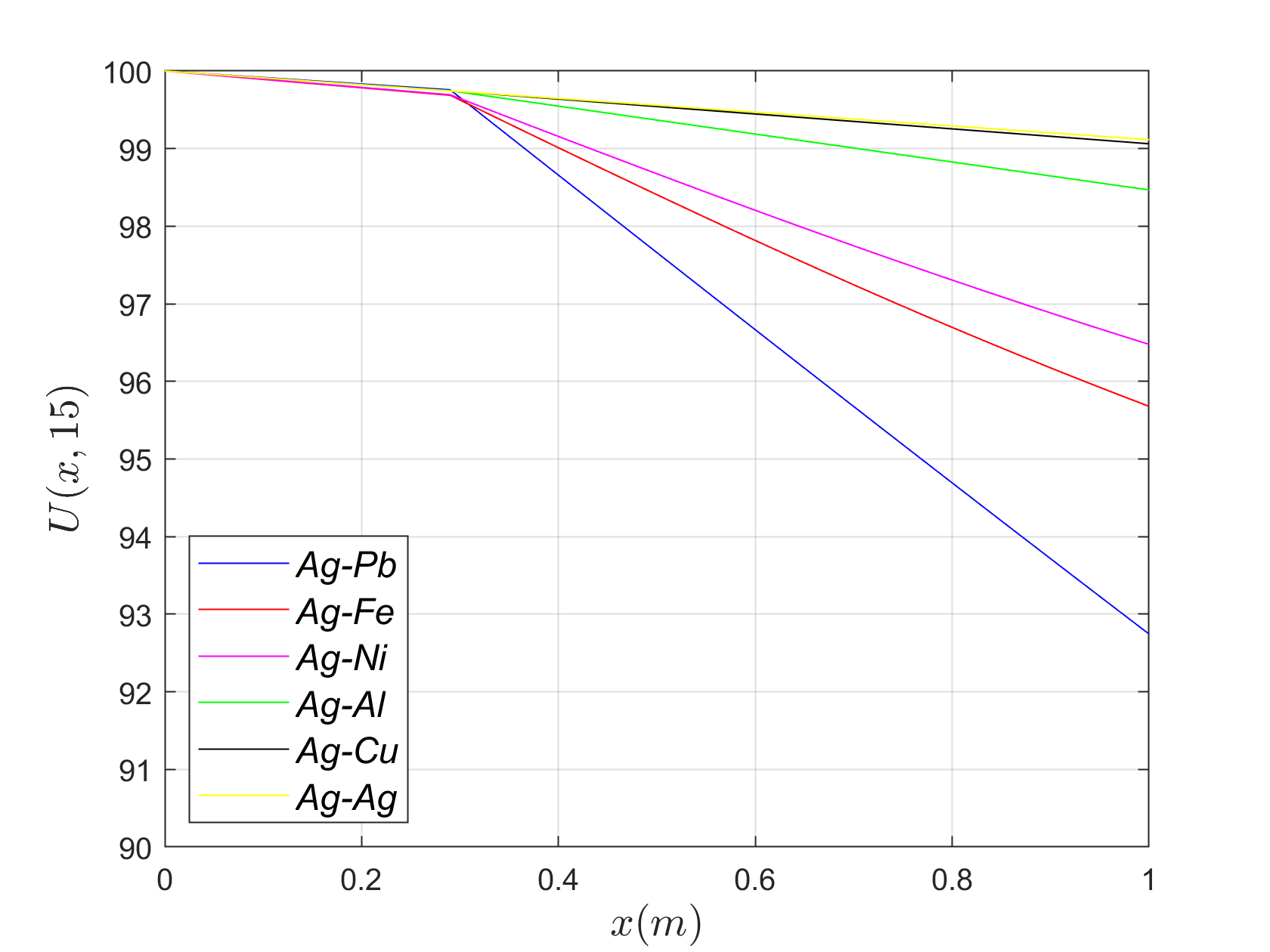}
         \end{center}
  \vspace{-0.5cm}
         \caption{Bar temperature at $t=1h$ (top) and $t=15h$ (bottom) for the Example \ref{numerical_ex1}.}
         \label{fig:Temph}
  \end{figure}
     \begin{figure}
 \begin{center}
\includegraphics[scale=0.25]{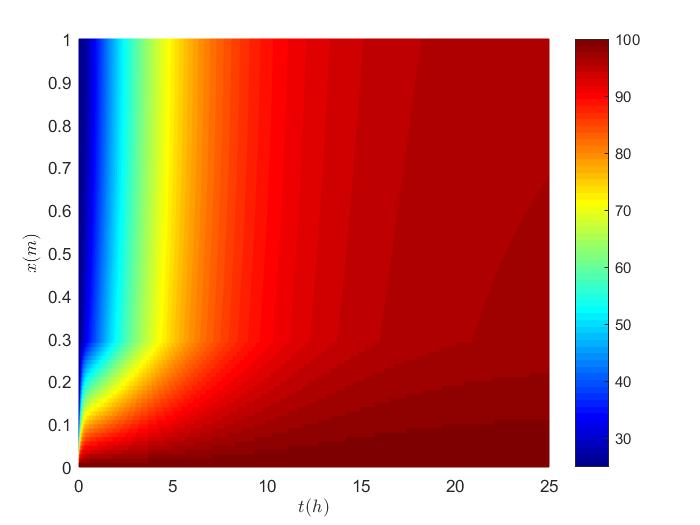}\\
\includegraphics[scale=0.25]{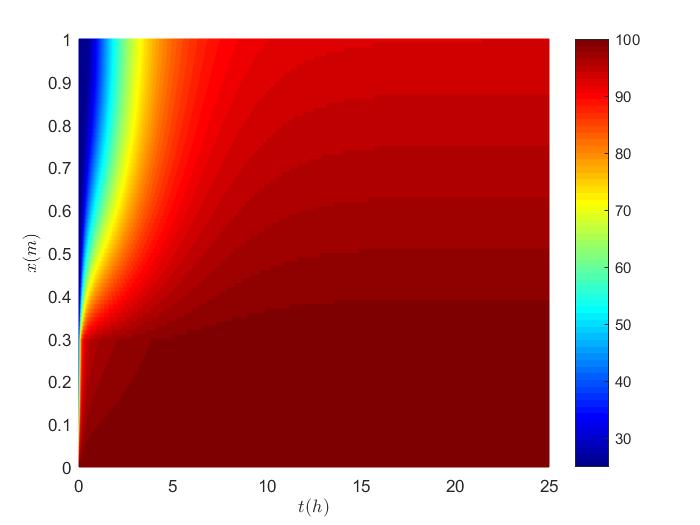}
 \end{center}
  \vspace{-0.5cm}
 \caption{Temperature as function of space and time for the Example \ref{numerical_ex1}.  Pb-Ag (top),  Ag-Pb (bottom).}
\label{fig:Temptx}
    \end{figure}
    
In Figure \ref{fig:Temph} temperature profiles  on the bar at $t=1h$  and at $t=15h$ are shown.
Note that for the latter, the curves resemble piecewise linear functions, which correspond to the steady-state as shown in the analytical formula given in \eqref{Uestacionario} and it is illustrated in Figure \ref{fig:stationary}. The slopes depend  on a particular combination of the conductivity values of the materials, the location of the interface and the source and room temperatures. This also agree with the analytical solution given in  \eqref{Uestacionario}, \eqref{Usmasfi}-\eqref{fi2} since the transitory terms approaches zero with time.

 Finally, in Figure \ref{fig:Temptx} the temperatures for Pb-Ag (top)  and Ag-Pb (bottom) as functions of space and time are  plotted, where the horizontal axis represents the time in hours and the vertical axis represents the distance from the left boundary in meters. 
 That is, for a fixed value of $ t $, the temperature distribution of the bar at that time can be seen vertically, from the left edge  $x = 0$ (bottom line of the graph) to the right one, $x = L$ (top line of the graph).
 On the other hand, taking a fixed value of $ x $, one can see the evolution of the temperature at that point by looking at the corresponding horizontal line.
 Notice, in both cases, a change in the  temperature behavior at the interface point ($x=0.3m$). Moreover, for $x \leq 0.3m$, the plot on bottom (Ag-Pb) shows that the temperature achieves higher values in a shorter period of time than for the corresponding one for Pb-Ag (top) under the same conditions. This observation is physically consistent  to the fact that Ag is a more diffusive material than Pb.  The materials for this example were chosen so that the differences in the behavior of the temperature function can be easily observed due to the large difference in their respective thermal diffusivities.
 
\section{Conclusion}
\label{S3} \vspace{-4pt}

In this work, the solution to a heat transfer problem along a bar with a solid-solid interface is considered. This study pursues to provide a theoretical basement that can help to gain insight into the effect of interfaces on heat transfer processes, from the mathematical point of view. A perfect assembly between the two parts are considered, so that differences between the analytical solution and experimental measurements will provide an amount of thermal dissipation between the two materials, that would be useful to model tension and roughness at the interface as well as solid-solid thermal resistance.  
The problem is described by an initial value parabolic partial differential equation with interface and Dirichlet and Robin boundary conditions. The analytical expression for the solution is derived where the steady-state form is explicitly included. The transient part of the solution is obtained which depends on the solution of a Sturm-Liouville problem. 
The existence of an infinite number of solutions to the eigenvalue problem is demonstrated and it is the most important  result of this work. Also, an illustrative example is included.

Numerical simulations are conducted by using an explicit finite difference scheme  where its convergence and stability properties are discussed.
Numerical results are consistent with analytical solutions and physical interpretations. 

Future works might include, among others, the study of mathematical models for the thermal behavior at the interface and how the imperfections or roughness at the solid-solid interface can change the temperature distribution at the bar. Also, extensions to 2D and 3D analysis and/or the problem for two or more interfaces can be conducted.

\vspace{10pt} \noindent
{\bf Acknowledgements:}  The research was supported by the
Universidad de San Mart\'in, Universidad Austral and, in the case of the first and third authors, by SOARD/AFOSR  (Grant FA9550-18-1-0523).
 The second author acknowledges support from European Union's Horizon 2020 Research and Innovation Programme under the Marie Sklodowska-Curie Grant Agreement No. 823731 CONMECH and by the Project PIP No. 0275 from CONICET-UA, Rosario, Argentina.

\end{document}